\numberwithin{equation}{section}
\newtheorem{thm}[equation]{Theorem}
\newtheorem{cor}[equation]{Corollary}
\newtheorem{prop}[equation]{Proposition}
\theoremstyle{definition}
\theoremstyle{remark}
\newtheorem{rem}[equation]{Remark}
\theoremstyle{remark}
\newcommand{\ovl}{\overline}
\subjclass[2020]{20G15 (20F12, 03C98)}
\keywords{Commutator, Linear algebraic group}
\date{April 22, 2024}
\title[Powers of commutators in linear algebraic groups]
{Powers of commutators in linear algebraic groups}
\author[B.\ Martin]{Benjamin Martin}
\address
{Department of Mathematics,
University of Aberdeen,
King's College,
Fraser Noble Building,
Aberdeen AB24 3UE,
United Kingdom}
\email{B.Martin@abdn.ac.uk}
\begin{document}

\begin{abstract}
 Let ${\mathcal G}$ be a linear algebraic group over $k$, where $k$ is an algebraically closed field, a pseudo-finite field or the valuation ring of a nonarchimedean local field.  Let $G= {\mathcal G}(k)$.  We prove that if $\gamma\in G$ such that $\gamma$ is a commutator and $\delta\in G$ such that $\langle \delta\rangle= \langle \gamma\rangle$ then $\delta$ is a commutator.  This generalises a result of Honda for finite groups.  Our proof uses the Lefschetz Principle from first-order model theory.
\end{abstract}

\maketitle

\section{The main result}

Let $G$ be a group.  We say that $G$ {\em has the Honda property}\footnote{This terminology was suggested to me by Hendrik Lenstra.} if for any $\gamma\in G$ such that $\gamma$ is a commutator and for any $\delta\in G$ such that $\langle \delta\rangle= \langle \gamma\rangle$, $\delta$ is also a commutator.  If $\gamma$ has infinite order then the only generators of $\langle \gamma\rangle$ are $\delta= \gamma^{\pm 1}$, so the condition above is only of interest when $\gamma$ has finite order.  The following result was proved by Honda in 1953 \cite{honda}.

\begin{thm}
\label{thm:finite}
 Any finite group has the Honda property.
\end{thm}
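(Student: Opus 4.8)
The plan is to combine Frobenius's classical commutator-counting formula with a short Galois-theoretic argument. Write $n$ for the order of $\gamma$, which is finite (the infinite-order case is vacuous, as noted above). The generators of $\langle\gamma\rangle$ are exactly the powers $\gamma^m$ with $\gcd(m,n)=1$, so the claim reduces to showing that $\gamma^m$ is a commutator whenever $\gcd(m,n)=1$, given that $\gamma$ is.

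For $g\in G$ let $N(g)$ be the number of pairs $(a,b)\in G\times G$ with $[a,b]=g$, so that $g$ is a commutator precisely when $N(g)>0$. Frobenius's formula gives
\[
N(g)=|G|\sum_{\chi\in\mathrm{Irr}(G)}\frac{\chi(g)}{\chi(1)},
\]
where $\mathrm{Irr}(G)$ is the set of irreducible complex characters of $G$. Setting $S(g)=\sum_{\chi}\chi(g)/\chi(1)$, we have $S(g)=N(g)/|G|$, which is a nonnegative \emph{rational} number; this is the one place where finiteness of $G$ is essential. I would then compare $S(\gamma)$ with $S(\gamma^m)$. Since the eigenvalues of $\rho(\gamma)$ are $n$-th roots of unity for any representation $\rho$ affording $\chi$, each value $\chi(\gamma)$ lies in the cyclotomic field $\mathbb{Q}(\zeta_n)$. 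For $\gcd(m,n)=1$ there is a Galois automorphism $\sigma_m$ of $\mathbb{Q}(\zeta_n)/\mathbb{Q}$ with $\sigma_m(\zeta_n)=\zeta_n^m$, and raising each eigenvalue to the $m$-th power shows $\sigma_m(\chi(\gamma))=\chi(\gamma^m)$. As $\chi(1)$ is a positive integer, $\sigma_m$ fixes it, so applying $\sigma_m$ term by term yields $\sigma_m(S(\gamma))=S(\gamma^m)$.

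The argument then closes quickly: because $S(\gamma)\in\mathbb{Q}$ it is fixed by $\sigma_m$, whence $S(\gamma)=S(\gamma^m)$ and therefore $N(\gamma^m)=N(\gamma)$. Since $\gamma$ is a commutator we have $N(\gamma)>0$, so $N(\gamma^m)>0$ and $\gamma^m$ is a commutator; taking $\delta=\gamma^m$ finishes the proof. I expect the main conceptual point — rather than any computational obstacle — to be recognizing the two ingredients that make this work in tandem: the compatibility $\sigma_m(\chi(\gamma))=\chi(\gamma^m)$ between the Galois action on character values and the power map on group elements, and the rationality of $N(\gamma)/|G|$, which forces $\sigma_m$-invariance. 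Everything else is routine verification.
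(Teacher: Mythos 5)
Your proof is correct: Frobenius's formula, the Galois equivariance $\sigma_m(\chi(\gamma))=\chi(\gamma^m)$, and the rationality of $N(\gamma)/|G|$ fit together exactly as you describe, and in fact you get the stronger conclusion $N(\gamma^m)=N(\gamma)$. The paper does not reproduce a proof of this theorem but cites Honda's original character-theoretic argument, which is essentially the argument you have given, so your route matches the one the paper relies on (the paper also mentions, without using it here, that Lenstra has a proof avoiding character theory entirely).
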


\noindent Honda's original proof is character-theoretic.  Recently Lenstra has given a short and elegant proof that avoids character theory completely \cite{lenstra}.

It is natural to ask which other groups have the Honda property.  Pride has given an example of a one-relator group with torsion which does not have the Honda property \cite[Result (C), p488]{pride}.  In this note we extend Theorem~\ref{thm:finite} to certain linear algebraic groups.  By a linear algebraic group over a ring $k$ we mean a smooth closed subgroup scheme of ${\rm GL}_n$ for some $n\in {\mathbb N}$; if $k$ is a field then these are just linear algebraic groups in the usual sense \cite{borel}, and they correspond to the affine algebraic groups of finite type by \cite[Cor.\ 4.10]{milne}.

\begin{thm}
\label{thm:algebraic}
 Let ${\mathcal G}$ be a linear algebraic group over $k$ and let $G= {\mathcal G}(k)$, where $k$ is one of the following.
 
 (a) An algebraically closed field.
 
 (b) A pseudo-finite field.
 
 (c) The valuation ring of a nonarchimedean local field.
 
 \noindent Then $G$ has the Honda property.
\end{thm}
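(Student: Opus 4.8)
The plan is to reformulate the Honda property as a family of first-order sentences and then transfer it from finite groups, where Theorem~\ref{thm:finite} applies, to $G={\mathcal G}(k)$. Since the property is vacuous for elements of infinite order, I would only consider $\gamma$ of finite order $n$; the generators of $\langle\gamma\rangle$ are then exactly the powers $\gamma^m$ with $\gcd(m,n)=1$, so the Honda property for $G$ amounts to the assertion that, for every $n\ge 1$ and every $m$ coprime to $n$, the $m$-th power of any commutator of order $n$ is again a commutator. For fixed $(n,m)$ this is a single sentence $\Phi_{n,m}$ in the language of groups, because ``$x$ has order exactly $n$'' is $x^n=1\wedge\bigwedge_{0<j<n}x^j\ne 1$ and ``$x$ is a commutator'' is $\exists c\,\exists d\ x=cdc^{-1}d^{-1}$.

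Next I would make $\Phi_{n,m}$ speak about $G$ through $k$. Realising ${\mathcal G}$ as a closed subscheme of $\GL_N\subseteq{\mathbb A}^{N^2+1}$ cut out by polynomials with coefficient vector $\bar c$, the multiplication and inversion on ${\mathcal G}$ are given by fixed polynomial maps, so the relativisation of $\Phi_{n,m}$ to the definable set ${\mathcal G}(k)$ is a ring formula with parameters $\bar c$. Quantifying universally over $\bar c$, and adding the first-order hypothesis that the locus cut out by $\bar c$ is closed under multiplication and inversion and contains the identity, yields a parameter-free ring sentence $\Theta_{n,m}$ whose truth in a field forces $\Phi_{n,m}$ for every linear algebraic subgroup of $\GL_N$ over that field. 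Since ${\mathcal G}(\FF_q)\subseteq\GL_N(\FF_q)$ is finite, Theorem~\ref{thm:finite} gives $\FF_q\models\Theta_{n,m}$ for all finite $\FF_q$; running the same argument inside $\overline{\FF}_p=\varinjlim_t\FF_{p^t}$, where any commutator identity involves only finitely many elements and hence lives in some finite ${\mathcal G}(\FF_{p^t})$, gives $\overline{\FF}_p\models\Theta_{n,m}$ for every $p$.

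The transfers are then immediate. For a pseudo-finite field $K$ (case (b)) one has $K\equiv\prod_{\mathcal U}\FF_{q_i}$ for a non-principal ultrafilter, so {\L}o\'{s}'s theorem and the finite-field case give $K\models\Theta_{n,m}$. For an algebraically closed $k$ (case (a)): if $\mathrm{char}\,k=p$ then $k\equiv\overline{\FF}_p$ by completeness of $\mathrm{ACF}_p$, and if $\mathrm{char}\,k=0$ then the Lefschetz principle carries $\Theta_{n,m}$, valid in $\overline{\FF}_p$ for all $p$, to characteristic $0$. In each case $k\models\Theta_{n,m}$, and instantiating $\bar c$ at the coefficients defining our ${\mathcal G}$ shows that ${\mathcal G}(k)$ satisfies $\Phi_{n,m}$; letting $(n,m)$ vary gives the Honda property. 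For case (c) I would instead argue by compactness: if $\OO$ has maximal ideal $\liem$ and finite residue field, smoothness of ${\mathcal G}$ gives ${\mathcal G}(\OO)=\varprojlim_j{\mathcal G}(\OO/\liem^j)$ with surjective reductions onto finite groups, and for $\gamma=[a,b]$ of order $n$ and $\delta=\gamma^m$ with $\gcd(m,n)=1$ each solution set $S_j=\{(c,d):[c,d]=\delta_j\}$ is nonempty (Theorem~\ref{thm:finite}) and finite; the $S_j$ form an inverse system under reduction, and an inverse limit of nonempty finite sets is nonempty, so a compatible choice gives $(c,d)\in{\mathcal G}(\OO)$ with $[c,d]=\delta$.

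The crux, I expect, is the algebraically closed case in characteristic $0$ (say $k=\CC$): there ${\mathcal G}(k)$ is genuinely infinite and is neither a direct nor an inverse limit of finite groups, so there is no elementary reduction to Theorem~\ref{thm:finite}, and the argument rests entirely on the model-theoretic transfer. The supporting bookkeeping --- checking that ``is a subgroup'' and ``has order exactly $n$'' are truly first-order, that the operations on ${\mathcal G}$ are uniformly polynomial so that $\Theta_{n,m}$ is a single ring sentence, and that smoothness supplies the surjective inverse system in~(c) --- is routine but is precisely where the hypotheses on ${\mathcal G}$ and $k$ are consumed.
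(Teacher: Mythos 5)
Your proposal is correct and follows essentially the same route as the paper: the same device of replacing the coefficients of the defining polynomials by universally quantified variables (with the first-order subgroup axioms as hypothesis) turns the Honda property into parameter-free ring sentences, which are verified over finite fields and over $\overline{\mathbb F}_p$ via Honda's theorem and local finiteness, and then transferred by completeness of $\mathrm{ACF}_p$, the Lefschetz principle, and pseudo-finiteness; your inverse-limit argument for (c) is the same compactness argument that the paper packages as ``every profinite group has the Honda property'' followed by the observation that ${\mathcal G}(k)$ is profinite. The remaining differences are cosmetic: the paper encodes $\langle\delta\rangle=\langle\gamma\rangle$ by the symmetric relations $\delta=\gamma^s$, $\gamma=\delta^t$ rather than by ``order exactly $n$'' (which also absorbs the infinite-order case — note this case is trivial rather than vacuous, since one still needs $[\alpha,\beta]^{-1}=[\beta,\alpha]$), it invokes the definition of pseudo-finiteness directly instead of Ax's ultraproduct characterization, and in (c) it needs neither smoothness nor surjectivity of the reduction maps, only that ${\mathcal G}({\mathcal O})=\varprojlim_j{\mathcal G}({\mathcal O}/{\mathfrak m}^j)$ with finite quotients.
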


\begin{rem}
\label{rem:strong}
 Let us say a group $G$ {\em has the strong Honda property} if for any $\alpha, \beta\in G$ and any $\delta\in G$ such that $\langle \delta\rangle= \langle [\alpha, \beta]\rangle$, there exist $\sigma, \tau\in \langle \alpha, \beta\rangle$ such that $\delta= [\sigma, \tau]$.  Clearly if $G$ has the strong Honda property then $G$ has the Honda property and any subgroup of $G$ has the strong Honda property.  Theorem~\ref{thm:finite} implies that any locally finite group has the strong Honda property.  In particular, if ${\mathcal G}$ is a linear algebraic group over $\ovl{{\mathbb F}_p}$ then ${\mathcal G}(\ovl{{\mathbb F}_p})$ is locally finite, so ${\mathcal G}(\ovl{{\mathbb F}_p})$ has the strong Honda property.  (Here $\ovl{{\mathbb F}_p}$ denotes the algebraic closure of the field with $p$ elements.)
 
 On the other hand, none of ${\rm (P)GL}_n({\mathbb C})$, ${\rm (P)GL}_n({\mathbb R})$, ${\rm (P)SL}_n({\mathbb C})$ and ${\rm (P)SL}_n({\mathbb R})$ has the strong Honda property if $n\geq 2$, so Theorem~\ref{thm:algebraic}(a) fails if we replace the Honda property with the strong Honda property.  To see this, consider the group $\Gamma_s= \langle a,b\,|\,[a,b]^s\rangle$.  Pride ({\em loc.\ cit.}) shows that if $0< t< s$ then $[a,b]^t$ is not a commutator in $\Gamma_s$ unless $t\equiv \pm 1\! \mod\! s$, so $\Gamma_s$ does not have the Honda property if $s> 6$ (choose $2\leq t\leq s- 2$ such that $t$ is coprime to $s$).  The group $\Gamma_s$ is Fuchsian (cf.\ \cite[Sec.\ 1]{LMR}), so it is a subgroup of ${\rm PSL}_2({\mathbb R})$.  Hence ${\rm PSL}_2({\mathbb R})$ does not have the strong Honda property, so ${\rm PSL}_2({\mathbb C})$, ${\rm PGL}_2({\mathbb R})$ and ${\rm PGL}_2({\mathbb C})$ also do not have the strong Honda property.  Now choose lifts $\widehat{a}$ (resp., $\widehat{b}$) of $a$ (resp., $b$) to elements of ${\rm SL}_2({\mathbb R})$, and let $\widehat{\Gamma}_s= \langle \widehat{a}, \widehat{b}\rangle$.  The element $[\widehat{a}, \widehat{b}]$ has order either $s$ or $2s$, and $[\widehat{a}, \widehat{b}]^t$ is not a commutator in $\widehat{\Gamma}_s$ if $0< t< s$ and $t\not\equiv \pm 1\mod\ s$.  Hence ${\rm SL}_2({\mathbb R})$ does not have the strong Honda property (choose $s> 6$ and choose $2\leq t\leq s- 2$ such that $t$ is coprime to $2s$), so ${\rm GL}_2({\mathbb R})$, ${\rm SL}_2({\mathbb C})$ and ${\rm GL}_2({\mathbb C})$ also do not have the strong Honda property.  Since we may embed ${\rm SL}_2({\mathbb R})$ as a subgroup of ${\rm (P)GL}_n({\mathbb C})$, ${\rm (P)GL}_n({\mathbb R})$, ${\rm (P)SL}_n({\mathbb C})$ and ${\rm (P)SL}_n({\mathbb R})$ for any $n\geq 3$, the assertion above follows.
\end{rem}

The proof of Theorem~\ref{thm:algebraic}(a) uses the Lefschetz Principle from first-order model theory.  The idea is very simple.  Let ${\mathcal G}$ be a linear algebraic group over an algebraically closed field $k$.  For the moment, assume ${\mathcal G}$ is defined over the prime field $F$.  Set $G= {\mathcal G}(k)$.  We can find an $F$-embedding of ${\mathcal G}$ as a closed subgroup of ${\rm SL}_n$ for some $n$: so $G$ is defined as a subset of ${\rm SL}_n(k)$ by polynomials over $F$ in $n^2$ variables and the group operations on $G$ are given by polynomial maps over $F$.  If $\gamma, \delta\in G$ then $\langle \delta\rangle= \langle \gamma\rangle$ if and only if $\delta= \gamma^s$ and $\gamma= \delta^t$ for some $s,t\in {\mathbb Z}$.  For fixed $s$ and $t$, we observe that the condition

\begin{equation*}
 (*)_{s,t} \ \ \ \mbox{for all $\gamma, \delta\in G$, if $\gamma$ is a commutator and $\delta= \gamma^s$ and $\gamma= \delta^t$ then $\delta$ is a commutator}
\end{equation*}

\bigskip
\noindent is given by a first-order sentence in the language of rings involving the $n^2$ variables of the ambient affine space.  Now $(*)_{s,t}$ is true if $k= \ovl{{\mathbb F}_p}$ for any prime $p$, by Remark~\ref{rem:strong}.  It follows from the Lefschetz Principle that $(*)_{s,t}$ is true for every algebraically closed field $k$, including the characteristic 0 case.  But $s$ and $t$ were arbitrary integers, so the result follows.  On the other hand, we see from Remark~\ref{rem:strong} that the analogous argument fails for the strong Honda property, so the strong Honda property cannot be expressed in a first-order way.

If $G$ is not defined over the prime field $F$ then the argument above needs modification.  The trouble is that the polynomials that define $G$ as a closed subgroup of ${\rm SL}_n(k)$ may involve parameters from $k$, so $(*)_{s,t}$ may fail to translate into an honest sentence.  We get around this by using a trick from \cite{MST2} (cf.\ also the discussion after Theorem 1.1 of \cite{MST1}): first we replace these parameters with variables, then we quantify over all possible values of these variables.  This amounts to quantifying over all linear algebraic groups of bounded complexity in an appropriate sense.  In \cite[Sec.\ 3.2]{MST2} this idea is formulated using the language of Hopf algebras; here we give a more concrete description.

We present the details in Section~\ref{sec:Lefschetz} below.  The proof of Theorem~\ref{thm:algebraic}(b) follows from a closely related argument.  We prove Theorem~\ref{thm:algebraic}(c) in Section~\ref{sec:profinite}.

\begin{rem}
 J. Wilson gives some other first-order properties of groups that hold for finite groups but not for arbitrary groups \cite{wilson}; see \cite{wilson06}, \cite{CW}.  One can use the methods of this paper to prove that these properties hold for linear algebraic groups over an algebraically closed or pseudo-finite field.  Likewise, Theorem~\ref{thm:algebraic} holds for definable groups in the sense of \cite[introduction]{PPS} over an algebraically closed or pseudo-finite field $k$; in particular, this includes group schemes of finite type over such $k$.  I'm grateful to Lenstra and Tiersma for these observations.
\end{rem}

\section{Model theory and the Lefschetz Principle}
\label{sec:Lefschetz}

We give a brief review of the model theory we need to prove Theorem~\ref{thm:algebraic}.  For more details, see \cite{marker}.  We work in the language of rings, which consists of two binary function symbols $+$ and $\cdot$, and two constant symbols 0 and 1.  A {\em formula} is a well-formed expression involving $+$, $\cdot$, $=$, 0 and 1, the symbols $\vee$ (or), $\wedge$ (and), $\rightarrow$ (implies), $\neg$ (not), some variables and the quantifiers $\forall$ and $\exists$.  For instance, $(\exists Y)\,Y^2= X$ and $(\forall X) (\exists Y) (\exists Z)\,X= Y^2+ Z^2+ 2$ and $X\neq 0 \rightarrow X^2= X$ are formulas.  The variable $X$ in the first formula is {\em free} because it is not attached to a quantifier, while the variable $Y$ is {\em bound}.  A {\em sentence} is a formula with no free variables; the second formula above is a sentence.  For any given ring $k$, a sentence is either true or false: for instance, the sentence $(\forall X) (\exists Y) (\exists Z)\,X= Y^2+ Z^2+ 2$ is true when $k= {\mathbb C}$ and false when $k= {\mathbb R}$.  If a formula involves one or more free variables then it doesn't make sense to ask whether it is true or false for a particular ring $k$; but if $\psi(X_1,\ldots, X_m)$ is a formula in free variables $X_1, \ldots, X_m$, $k$ is a ring and $\alpha_1,\ldots, \alpha_m\in k$ then the expression $\psi(\alpha_1,\ldots, \alpha_m)$ we get by substituting $X_i= \alpha_i$ for $1\leq i\leq m$ is either true or false.

We can turn a formula into a sentence by quantifying over the free variables: e.g., quantifying over all $X$ in the third formula above gives the sentence $(\forall X)\,X\neq 0 \rightarrow X^2= X$, which is false for any field with more than 2 elements.  Note that if $k= {\mathbb R}$ then the expression $(\forall X)(\exists Y)\, Y^2= \pi X$ is not a sentence in the above sense as it involves the real parameter $\pi$.  This problem does not arise with the expression $(\forall X) (\exists Y) (\exists Z)\,X= Y^2+ Z^2+ 2$: we don't need to regard 2 as a real parameter, because we get $2= 1+1$ for free by adding the constant symbol 1 to itself.  More generally, if $f(X_1,\ldots, X_m)$ is a polynomial over ${\mathbb Z}$ in variables $X_1,\ldots, X_m$ then (say) the expression $(\exists X_1)\cdots (\exists X_m)\, f(X_1,\ldots, X_m)= 0$ is a sentence.

An infinite field $k$ is {\em pseudo-finite} if it is a model of the theory of finite fields: that is, a sentence is true for $k$ if it is true for every finite field.  A nonprincipal ultraproduct of an infinite collection of finite fields is pseudo-finite.  For instance, if ${\mathcal U}$ is a nonprincipal ultrafilter on the set of all primes then the ultraproduct $\prod_{\mathcal U} {\mathbb F}_p$ is a pseudo-finite field of characteristic 0, and many of the infinite subfields of $\ovl{{\mathbb F}_p}$ are pseudo-finite fields of characteristic $p$.  See \cite{chatzidakis} and \cite[(5.1)]{chatzidakis2} for more details and examples.

We will use the following version of the Lefschetz Principle (see \cite[Cor.\ 2.2.9 and Cor.\ 2.2.10]{marker}).

\begin{thm}[The Lefschetz Principle]
\label{thm:Lefschetz}
 Let $\psi$ be a sentence.
 
 (a) Let $p$ be 0 or a prime.  If $\psi$ is true for some algebraically closed field of characteristic $p$ then $\psi$ is true for every algebraically closed field of characteristic $p$.
 
 (b) $\psi$ is true for some algebraically closed field of characteristic 0 if and only if for all but finitely many primes $p$, $\psi$ is true for some algebraically closed field of characteristic $p$.
\end{thm}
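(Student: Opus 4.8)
The plan is to derive both parts from a single model-theoretic fact---the completeness of the first-order theory of algebraically closed fields of each fixed characteristic---together with the Compactness Theorem. For $p$ a prime or $p=0$, let ${\rm ACF}_p$ denote the theory in the language of rings axiomatised by the field axioms, the sentences $\sigma_n$ ($n\geq 1$) asserting that every monic polynomial of degree $n$ has a root, and the characteristic axioms: the single sentence $p\cdot 1=0$ when $p$ is prime, or the infinite family $\{\ell\cdot 1\neq 0 : \ell\ \text{prime}\}$ when $p=0$. The models of ${\rm ACF}_p$ are exactly the algebraically closed fields of characteristic $p$, so the phrase ``$\psi$ is true for some algebraically closed field of characteristic $p$'' is synonymous with ``${\rm ACF}_p\not\models\neg\psi$'', while ``$\psi$ is true for every algebraically closed field of characteristic $p$'' is synonymous with ``${\rm ACF}_p\models\psi$''.

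For part (a) I would prove that ${\rm ACF}_p$ is a complete theory and read off the statement. The tool is the \L o\'s--Vaught test: a satisfiable theory in a countable language with no finite models that is categorical in some infinite cardinality is complete. The algebraic input is Steinitz's classification, which says that an algebraically closed field is determined up to isomorphism by its characteristic and its transcendence degree over the prime field. Hence for any uncountable $\kappa$, two algebraically closed fields of characteristic $p$ and cardinality $\kappa$ both have transcendence degree $\kappa$ and are therefore isomorphic; thus ${\rm ACF}_p$ is categorical in every uncountable cardinality. Since every algebraically closed field is infinite, applying the test with, say, $\kappa=\aleph_1$ yields completeness. Completeness says precisely that for each sentence $\psi$ we have ${\rm ACF}_p\models\psi$ or ${\rm ACF}_p\models\neg\psi$; so if $\psi$ holds in one model it cannot hold that ${\rm ACF}_p\models\neg\psi$, whence ${\rm ACF}_p\models\psi$ and $\psi$ holds in every model. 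This is (a).

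Part (b) is then a compactness argument. For the forward direction, if $\psi$ holds in some algebraically closed field of characteristic $0$ then ${\rm ACF}_0\models\psi$ by (a), so by Compactness $\psi$ is a consequence of finitely many axioms of ${\rm ACF}_0$; these involve only finitely many of the sentences $\ell\cdot 1\neq 0$, say those for $\ell\in\{p_1,\dots,p_r\}$. Any algebraically closed field whose characteristic lies outside $\{p_1,\dots,p_r\}$ satisfies all the axioms used and hence satisfies $\psi$; so $\psi$ is true in characteristic $p$ for all but finitely many primes $p$. For the converse, suppose $\psi$ failed in characteristic $0$; by (a) we would have ${\rm ACF}_0\models\neg\psi$, and applying the forward direction to $\neg\psi$ would force $\psi$ to be false in all but finitely many positive characteristics, contradicting the hypothesis that $\psi$ holds in infinitely many. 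I expect the genuine content to sit entirely in part (a): verifying the uncountable categoricity of ${\rm ACF}_p$ from Steinitz's theorem and feeding it into the \L o\'s--Vaught test is the crux, whereas part (b) is a routine application of Compactness once (a) is in hand.
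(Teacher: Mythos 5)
Your proof is correct. Note that the paper does not prove this theorem at all --- it cites it directly from the literature (\cite[Cor.\ 2.2.9 and Cor.\ 2.2.10]{marker}) --- and your argument (completeness of ${\rm ACF}_p$ via Steinitz's theorem, uncountable categoricity and the \L o\'s--Vaught test, followed by a compactness argument for part (b)) is precisely the standard proof given in that reference, so you have simply supplied the details that the paper delegates to its source.
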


\noindent We have an immediate corollary.

\begin{cor}
\label{cor:Fpbar}
 Let $\psi$ be a sentence.  If $\psi$ is true for $k= \ovl{{\mathbb F}_p}$ for every prime $p$ then $\psi$ is true for every algebraically closed field.
\end{cor}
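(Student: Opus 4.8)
The plan is to deduce the corollary directly from the two parts of Theorem~\ref{thm:Lefschetz}, splitting into cases according to the characteristic of the given field. Let $k$ be an arbitrary algebraically closed field; I want to show that $\psi$ is true for $k$.

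First suppose $\mathrm{char}(k)= p> 0$. Then $\ovl{{\mathbb F}_p}$ is an algebraically closed field of characteristic $p$, and $\psi$ holds for it by hypothesis. Thus $\psi$ is true for at least one algebraically closed field of characteristic $p$, so Theorem~\ref{thm:Lefschetz}(a) upgrades this to truth for \emph{every} algebraically closed field of characteristic $p$; in particular $\psi$ is true for $k$.

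The characteristic $0$ case requires one extra step. By hypothesis $\psi$ is true for $\ovl{{\mathbb F}_p}$, which is an algebraically closed field of characteristic $p$, for every prime $p$; a fortiori this holds for all but finitely many $p$. Theorem~\ref{thm:Lefschetz}(b) then yields that $\psi$ is true for some algebraically closed field of characteristic $0$, and a final application of Theorem~\ref{thm:Lefschetz}(a), now with $p= 0$, promotes this to truth for every algebraically closed field of characteristic $0$, hence for $k$.

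I do not expect any genuine obstacle: the statement is an essentially formal consequence of the Lefschetz Principle. The only point that needs a moment's care is the flow of quantifiers in the characteristic $0$ case, where one passes from the strong hypothesis (truth for the specific field $\ovl{{\mathbb F}_p}$ for every $p$) through the weaker transfer hypothesis of part (b) (truth for \emph{some} field in each characteristic, and only cofinitely many primes are needed) to the conclusion, remembering to invoke part (a) once more to turn ``some characteristic $0$ field'' into ``every characteristic $0$ field.''
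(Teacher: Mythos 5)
Your proof is correct and is exactly the argument the paper intends: the paper simply declares the corollary ``immediate'' from Theorem~\ref{thm:Lefschetz}, and your two-case argument (part (a) alone in positive characteristic; part (b) followed by part (a) with $p=0$ in characteristic $0$) fills in precisely those steps.
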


Now fix a field $k$.  Let ${\mathcal G}$ be a linear algebraic group over $k$.  Choose an embedding of ${\mathcal G}$ as a closed subgroup of ${\rm SL}_n$ for some $n$.  We regard ${\rm SL}_n(k)$ as a subset of affine space $k^{n^2}$ in the usual way.  Denote the co-ordinates of $k^{n^2}$ by $X_{ij}$ for $1\leq i\leq n$ and $1\leq j\leq n$.  Let $G= {\mathcal G}(k)$.  Our embedding of ${\mathcal G}$ in ${\rm SL}_n$ allows us to regard $G$ as a subset of $k^{n^2}$ given by the set of zeroes of some polynomials $f_1(X_{ij}),\ldots, f_r(X_{ij})$ over $k$ in the $X_{ij}$.

Fix $s,t\in {\mathbb Z}$.  We want to interpret the condition in $(*)_{s,t}$ as a sentence.  The subset ${\rm SL}_n(k)$ of $k^{n^2}$ is the set of zeroes of finitely many polynomials over ${\mathbb Z}$ in the matrix entries and the group operations on ${\rm SL}_n(k)$ are given by polynomials over ${\mathbb Z}$ in the matrix entries, so the conditions on $\alpha, \beta, \gamma, \delta, \sigma, \tau\in {\rm SL}_n(k)$ that $\delta= \gamma^s$, $\gamma^t= \delta$, $\gamma= [\alpha, \beta]$ and $\delta= [\sigma, \tau]$ are given by formulas in the matrix entries of $\alpha, \beta, \gamma, \delta, \sigma$ and $\tau$.  (For instance, if we denote the co-ordinates of $\alpha$ by $X_{ij}$ then the condition $\alpha\in {\rm SL}_n(k)$ is given by the formula ${\rm det}(X_{ij})- 1= 0$.)  However, the conditions $\alpha, \beta, \gamma, \delta, \sigma, \tau\in G$ may fail to be given by formulas as the polynomials $f_l(X_{ij})$ may involve some arbitrary elements of $k$.

We avoid this problem as follows.  Fix $r\geq n$, let $m_1(X_{ij}),\ldots, m_c(X_{ij})$ be a listing (in some fixed but arbitrary order) of all the monomials in the $X_{ij}$ of total degree at most $r$, and let $V_r$ be the subspace of the polynomial ring $k[X_{ij}]$ spanned by the $m_e(X_{ij})$.  Let $Z_{ab}$ for $1\leq a\leq c$ and $1\leq b\leq c$ be variables.  Define $g_d(X_{ij}, Z_{ab})= \sum_{e= 1}^c Z_{ed}m_e(X_{ij})$ for $1\leq d\leq c$.  Now let $\epsilon= (\epsilon_{ab})_{1\leq a\leq c, 1\leq b\leq c}$ be a tuple of elements of $k$.  We define
$$ G_\epsilon= \{\eta_{ij}\in k^{n^2}\,|\,{\rm det}(\eta_{ij})= 1\ \mbox{and}\ g_d(\eta_{ij}, \epsilon_{ab})= 0\ \mbox{for $1\leq d\leq c$}\}.  $$
If $H$ is a subgroup of ${\rm SL}_n(k)$ defined by polynomials over $k$ in the $X_{ij}$ of degree at most $r$ then we say $H$ {\em has complexity at most $r$}: in particular, $G_\epsilon$ has complexity at most $r$.\footnote{We assume that $r\geq n$, so the polynomial ${\rm det}(\beta_{ij})- 1$ has degree $n\leq r$.}  Conversely, any closed subgroup $H$ of complexity at most $r$ is of the form $G_\epsilon$ for some $\epsilon$ (note that we need at most $c$ polynomials of the form $g_d$ to define a subgroup of complexity at most $r$ because $\dim V_r= c$).  Any closed subgroup of ${\rm SL}_n(k)$ has complexity at most $r'$ for some $r'\geq n$.

Let $\phi(Z_{ab})$ be the formula in free variables $Z_{ab}$ given by
$$ (\forall U_{ij})(\forall V_{ij}) [[{\rm det}(U_{ij})= 1\wedge {\rm det}(V_{ij})= 1]\wedge [g_d(U_{ij}, Z_{ab})= 0\ \wedge\ g_d(V_{ij}, Z_{ab})= 0 \ \mbox{for $1\leq d\leq c$}]] $$
$$ \rightarrow g_d(W_{ij}, Z_{ab})= 0 \ \mbox{for $1\leq d\leq c$}, $$
where $W_{ij}$ is shorthand for $\sum_{l= 1}^n U_{il}V_{lj}$.  Then $G_\epsilon$ is closed under multiplication if and only if $\phi(\epsilon_{ab})$ is true.  Likewise, there are formulas $\chi(Z_{ab})$ and $\eta(Z_{ab})$ such that $G_\epsilon$ is closed under taking inverses if and only if $\chi(\epsilon_{ab})$ is true, and the identity $I$ belongs to $G_\epsilon$ if and only if $\eta(\epsilon_{ab})$ is true.

Now consider the condition $\psi_{n,s,t,r}$ given by
$$ [(\forall Z_{ab})\ \phi(Z_{ab})\wedge \chi(Z_{ab})\wedge \eta(Z_{ab})]\rightarrow $$
$$ (\forall \alpha\in G_Z)(\forall \beta\in G_Z)(\forall \delta\in G_Z)\ (\delta= [\alpha, \beta]^s\ \wedge\ [\alpha, \beta]= \delta^t) \rightarrow ((\exists \sigma\in G_Z)(\exists \tau\in G_Z)\,\delta= [\sigma, \tau]), $$

\noindent where $\alpha= (\alpha_{ij}), \beta= (\beta_{ij}), \dots$ are tuples representing elements of ${\rm SL}_n(k)$.  Here $\alpha\in G_Z$ is shorthand for $[{\rm det}(\alpha_{ij})= 1]\wedge \left[\bigwedge_{1\leq d\leq c} g_d(\alpha_{ij}, Z_{ab})= 0\right]$, and likewise for $\beta\in G_Z$, etc.  We regard $\psi_{n,s,t,r}$ as a sentence in variables $Z_{ab}$, $\alpha_{ij}$, $\beta_{ij}$, etc.  The above discussion yields the following: for any field $k$ and for any $n,s,t,r$, the sentence $\psi_{n,s,t,r}$ is true for $k$ if and only if\medskip\\
$(\dagger)$\ for every closed subgroup $G$ of ${\rm SL}_n(k)$ of complexity at most $r$ and for every $\alpha, \beta, \delta\in G$ such that $\delta= [\alpha, \beta]^s$ and $[\alpha, \beta]= \delta^t$, there exist $\sigma, \tau\in G$ such that $\delta= [\sigma, \tau]$.

We see from the above discussion that
$$ (\flat)\ \ \ \ \ \mbox{$\mathcal{G}(k)$ has the Honda property for every linear algebraic group $\mathcal{G}$ over $k$ if and only if} $$
$$ \mbox {the sentence $\psi_{n,s,t,r}$ is true for $k$ for all $n,s,t,r$.} $$

\begin{prop}
\label{prop:allpsi}
 Let $n\in {\mathbb N}$, let $r\geq n$ and let $s,t\in {\mathbb Z}$.  Let $k$ be an algebraically closed field or a pseudo-finite field.  Then $\psi_{n,s,t,r}$ is true for $k$.
\end{prop}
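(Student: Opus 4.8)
The plan is to verify $\psi_{n,s,t,r}$ separately in the two cases, reducing each to Honda's theorem. Recall from the discussion preceding the proposition that, for any field $k$, the sentence $\psi_{n,s,t,r}$ is true for $k$ if and only if condition $(\dagger)$ holds. The key observation is that the hypotheses $\delta = [\alpha,\beta]^s$ and $[\alpha,\beta] = \delta^t$ of $(\dagger)$ together say exactly that $\langle \delta \rangle = \langle [\alpha,\beta] \rangle$: the inclusion $\langle \delta \rangle \subseteq \langle [\alpha,\beta] \rangle$ comes from $\delta = [\alpha,\beta]^s$, and the reverse inclusion from $[\alpha,\beta] = \delta^t$. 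Hence $(\dagger)$ follows as soon as we know that every closed subgroup $G$ of ${\rm SL}_n(k)$ of complexity at most $r$ has the Honda property, applied with $\gamma = [\alpha,\beta]$.

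For the algebraically closed case I would first establish $\psi_{n,s,t,r}$ for $k = \ovl{{\mathbb F}_p}$ for every prime $p$, and then appeal to Corollary~\ref{cor:Fpbar}. Over $\ovl{{\mathbb F}_p}$ the group ${\rm SL}_n(\ovl{{\mathbb F}_p})$ is locally finite, so any subgroup $G$ is locally finite and therefore has the strong Honda property by Remark~\ref{rem:strong}; in particular $G$ has the Honda property, which yields $(\dagger)$ by the previous paragraph. Corollary~\ref{cor:Fpbar} then propagates the truth of $\psi_{n,s,t,r}$ to every algebraically closed field.

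For the pseudo-finite case I would invoke the definition of pseudo-finiteness directly: a sentence holds for a pseudo-finite field whenever it holds for every finite field. It therefore suffices to check $\psi_{n,s,t,r}$ for each finite field ${\mathbb F}_q$. There ${\rm SL}_n({\mathbb F}_q)$ is finite, so every closed subgroup $G$ is a finite group and hence has the Honda property by Theorem~\ref{thm:finite}; as above this gives $(\dagger)$, so $\psi_{n,s,t,r}$ is true for ${\mathbb F}_q$ and consequently for every pseudo-finite field.

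I do not anticipate a serious obstacle: the model-theoretic formalism has been set up precisely so that the proposition collapses onto Honda's theorem, with local finiteness bridging the gap in the algebraically closed case and the transfer definition of pseudo-finiteness doing so in the pseudo-finite case. The only point demanding a little care is the bookkeeping of the first paragraph---confirming that the polynomial conditions encode $\langle \delta \rangle = \langle [\alpha,\beta] \rangle$ and that $(\dagger)$ is an instance of the Honda property---so that the group-theoretic input can be applied without friction.
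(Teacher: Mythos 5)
Your proof is correct and follows essentially the same route as the paper: verify $(\dagger)$ for $\ovl{{\mathbb F}_p}$ via local finiteness and Remark~\ref{rem:strong}, transfer to all algebraically closed fields by Corollary~\ref{cor:Fpbar}, and handle pseudo-finite fields by checking finite fields (where algebraic groups are finite, so Theorem~\ref{thm:finite} applies) and invoking the definition of pseudo-finiteness. The extra bookkeeping in your first paragraph, identifying the conditions $\delta=[\alpha,\beta]^s$ and $[\alpha,\beta]=\delta^t$ with $\langle\delta\rangle=\langle[\alpha,\beta]\rangle$, is left implicit in the paper but is a worthwhile clarification.
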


\begin{proof}
 Fix $n, s, t$ and $r$.  If $k= \ovl{{\mathbb F}_p}$ for some prime $p$ then $\mathcal{G}(k)$ has the Honda property for every linear algebraic group $\mathcal{G}$ over $k$ by Remark~\ref{rem:strong}, so $\psi_{n,s,t,r}$ is true for $k$ by $(\flat)$.  Hence $\psi_{n,s,t,r}$ is true for any algebraically closed field $k$ by Corollary~\ref{cor:Fpbar}.  If $k$ is a finite field then $\mathcal{G}(k)$ has the Honda property for every linear algebraic group $\mathcal{G}$ over $k$ by Theorem~\ref{thm:finite}, so $\psi_{n,s,t,r}$ is true for $k$ by $(\flat)$.  Hence $\psi_{n,s,t,r}$ is true for any pseudo-finite field.
\end{proof}

\begin{proof}[Proof of Theorem~\ref{thm:algebraic}(a) and (b)]
 This follows immediately from Proposition~\ref{prop:allpsi} and $(\flat)$.
\end{proof}

\begin{rem}
 We don't know whether $\mathcal{G}(k)$ must satisfy the Honda property for $\mathcal{G}$ a linear algebraic group over an arbitrary field $k$.  Tiersma has observed that if the conclusion of Theorem~\ref{thm:algebraic} holds for a field $k$ then it holds for any algebraic field extension $k'$ of $k$.  To see this, let $\mathcal{G}'$ be a linear algebraic group over $k'$.  Let $\alpha, \beta\in \mathcal{G}'(k')$.  Let $\gamma= [\alpha, \beta]$ and let $\delta\in \mathcal{G}'(k')$ such that $\langle \delta\rangle= \langle \gamma\rangle$.  There is a field $k_1$ such that $k\subseteq k_1\subseteq k'$, $k_1/k$ is finite, $\mathcal{G}'$ descends to a $k_1$-group $\mathcal{G}_1$ and $\alpha, \beta, \gamma, \delta\in \mathcal{G}_1(k_1)$.  Let $\mathcal{G}$ be the Weil restriction $R_{k_1/k}(\mathcal{G}_1)$ \cite[A.5]{CGP}.  Then $\mathcal{G}(k)= \mathcal{G}_1(k_1)$.  Since $\mathcal{G}(k)$ satisfies the Honda property by assumption, the result follows.
 
 Here is another situation where we obtain a positive result.  Let $G$ be a connected compact Lie group.  Then every element of $[G,G]$ is a commutator \cite[Thm.\ 6.55]{HM}, and it follows that $G$ has the Honda property.  The link with linear algebraic groups is the following: if ${\mathcal G}$ is a Zariski-connected real reductive algebraic group then $G:= {\mathcal G}({\mathbb R})$ is compact if and only if ${\mathcal G}$ is anisotropic, and in this case $G$ is connected in the standard topology \cite[V.24.6(c)(ii)]{borel}.  A similar argument using \cite[Theorem]{ree} shows that any connected reductive linear algebraic group over an algebraically closed field has the Honda property; this recovers a special case of Theorem~\ref{thm:algebraic}(a).
\end{rem}

\section{Profinite groups}
\label{sec:profinite}

In this section we prove Theorem~\ref{thm:algebraic}(c); by a nonarchimedean local field we mean either a finite extension of a $p$-adic field or the field of Laurent series ${\mathbb F}_q(\!(T)\!)$ in an indeterminate $T$ for some prime power $q$.  In fact, we prove a more general result for profinite groups.  Recall that a topological group is profinite if and only if it is compact and Hausdorff and has a neighbourhood base at the identity consisting of open subgroups \cite[Prop.\ 1.2]{DDMS}.

\begin{prop}
\label{prop:profinite}
Every profinite group has the Honda property.
\end{prop}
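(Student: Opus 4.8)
The plan is to deduce the profinite case from the finite case (Theorem~\ref{thm:finite}) by a compactness argument. Let $G$ be a profinite group, let $\gamma= [\alpha, \beta]$ be a commutator in $G$, and let $\delta\in G$ satisfy $\langle \delta\rangle= \langle \gamma\rangle$. As in Section~\ref{sec:Lefschetz}, this last condition means $\delta= \gamma^s$ and $\gamma= \delta^t$ for some $s, t\in {\mathbb Z}$. (If $\gamma$ has infinite order then $\delta= \gamma^{\pm 1}$ and $\gamma^{-1}= [\beta, \alpha]$ is already a commutator, but we shall not need to treat this case separately.) I would write $\{N\}$ for the collection of open normal subgroups of $G$, so that $G= \varprojlim_N G/N$ with each $G/N$ finite, and I would use a bar to denote images in a quotient $G/N$.

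First I would pass to each finite quotient. Fix an open normal subgroup $N$. In $G/N$ we have $\ovl{\gamma}= [\ovl{\alpha}, \ovl{\beta}]$, so $\ovl{\gamma}$ is a commutator; moreover $\ovl{\delta}= \ovl{\gamma}^s$ and $\ovl{\gamma}= \ovl{\delta}^t$, so $\langle \ovl{\delta}\rangle= \langle \ovl{\gamma}\rangle$ inside the finite group $G/N$. Theorem~\ref{thm:finite} then shows that $\ovl{\delta}$ is a commutator in $G/N$, and by surjectivity of $G\to G/N$ there exist $\sigma, \tau\in G$ with $[\sigma, \tau]\equiv \delta \pmod N$. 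Hence, for each $N$, the set
$$ C_N= \{(\sigma, \tau)\in G\times G\,:\, [\sigma, \tau]\delta^{-1}\in N\} $$
is nonempty.

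Next comes the compactness step, which is the crux. Each $C_N$ is a clopen subset of the compact space $G\times G$, being the preimage of the clopen subgroup $N$ under the continuous map $(\sigma, \tau)\mapsto [\sigma, \tau]\delta^{-1}$. The family $\{C_N\}$ is directed under reverse inclusion, since $C_{N_1}\cap C_{N_2}\supseteq C_{N_1\cap N_2}$ and $N_1\cap N_2$ is again open and normal; together with the nonemptiness just established, this shows $\{C_N\}$ has the finite intersection property. As $G\times G$ is compact and the $C_N$ are closed, $\bigcap_N C_N$ is nonempty. Finally, because $G$ is Hausdorff we have $\bigcap_N N= \{e\}$, so
$$ \bigcap_N C_N= \{(\sigma, \tau)\in G\times G\,:\, [\sigma, \tau]= \delta\}, $$
and any pair in this intersection exhibits $\delta$ as a commutator in $G$.

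The main obstacle is precisely the passage from the finite quotients back to $G$: the commutator expressions $\ovl{\delta}= [\ovl{\sigma}, \ovl{\tau}]$ produced in the various $G/N$ need not be compatible as $N$ shrinks, so one cannot simply take an inverse limit of the chosen pairs. Packaging the solution sets as closed subsets $C_N$ and invoking compactness (equivalently, the nonemptiness of an inverse limit of nonempty finite sets) circumvents this incompatibility. Everything else is routine: continuity of the group operations, the clopenness of open subgroups, and the triviality of $\bigcap_N N$ are standard features of profinite groups.
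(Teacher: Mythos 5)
Your proof is correct and is essentially the same argument as the paper's: pass to the finite quotients $G/N$, apply Theorem~\ref{thm:finite} there, package the solutions as nonempty closed subsets $C_N$ of $G\times G$ with the finite intersection property, and use compactness together with $\bigcap_N N=\{e\}$ to produce a single pair $(\sigma,\tau)$ with $[\sigma,\tau]=\delta$. The only (harmless) difference is that you index over open normal subgroups while the paper indexes over finite-index normal subgroups.
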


\begin{proof}
 Let $G$ be a profinite group.  Let $Q$ be the set of finite-index normal subgroups of $G$.  Let $\gamma, \delta\in G$ such that $\gamma$ is a commutator and $\langle \delta\rangle= \langle \gamma\rangle$.   For $N\in Q$, let $\pi_N\colon G\to G/N$ denote the canonical projection, and set $C_N= \{(\sigma, \tau)\in G\times G\,|\,\pi_N(\delta)= [\pi_N(\sigma), \pi_N(\tau)]\}$, a closed subset of $G\times G$.  By Theorem~\ref{thm:finite}, $C_N$ is nonempty for each $N\in Q$.  If $N_1,\ldots, N_t\in Q$ then $N_1\cap\cdots \cap N_t\in Q$ and $C_{N_1\cap\cdots \cap N_t}= C_{N_1}\cap\cdots \cap C_{N_t}$; in particular, $C_{N_1}\cap\cdots \cap C_{N_t}$ is nonempty.  By the finite intersection property for compact spaces, $\bigcap_{N\in Q} C_N$ is nonempty, so we can pick an element $(\sigma, \tau)$ from it.  Then $\pi_N(\delta)= [\pi_N(\sigma), \pi_N(\tau)]$ for all $N\in Q$.  But $\bigcap_{N\in Q} N= 1$ as $G$ is profinite, so $[\sigma, \tau]= \delta$ and we are done.
\end{proof}

\begin{rem}
 Lenstra has observed that if $G$ is profinite then a variation of the Honda property holds: if $\gamma, \delta\in G$, $\gamma$ is a commutator and the closures of $\langle \gamma\rangle$ and $\langle \delta\rangle$ are equal then $\delta$ is also a commutator.  The argument is very similar to the proof of Proposition~\ref{prop:profinite}.
\end{rem}

\begin{proof}[Proof of Theorem~\ref{thm:algebraic}(c)]
 Let $F$ be a nonarchimedean local field with associated norm $\nu$ and valuation ring $k$.  Fix a uniformiser $\pi$.  Let ${\mathcal G}$ be a linear algebraic group over $k$ and let $G= {\mathcal G}(k)$.  It is enough by Proposition~\ref{prop:profinite} to show that $G$ is profinite.  By assumption, there is an embedding of ${\mathcal G}$ as a closed subgroup of ${\rm SL}_n$ for some $n$.  So $G$ is a subgroup of ${\rm SL}_n(k)$ and there are polynomials $f_1,\ldots, f_t$ in $n^2$ variables over $k$ for some $t\in {\mathbb N}_0$ such that $G= \{a_{ij}\in k^{n^2}\,|\,f_i(a_{ij})= 0\ \mbox{for $1\leq i\leq t$}\}$ (we need only finitely many polynomials as DVRs are noetherian).  The operations of addition and multiplication on $k$ are continuous; it follows easily that $G$ is a closed subspace of $k^{n^2}$ with respect to the topology on $k^{n^2}$ induced by $\nu$, and the group operations on $G$ are continuous.  Hence $G$ is a compact topological group.  The open subgroups $G_n:= \{g\in G\,|\,g= 1 \mod \pi^n\}$ form a neighbourhood base at the identity.  It follows that $G$ is profinite, as required.
\end{proof}


\bigskip
\bigskip
\noindent {\bf Acknowledgements:} I'm grateful to Hendrik Lenstra for introducing me to the Honda property, for stimulating discussions and for sharing a draft of his preprint \cite{lenstra} with me.  I'm grateful to Hendrik and to Samuel Tiersma for comments on earlier drafts of this note.  I'd also like to thank the referees for some comments and corrections, and the Edinburgh Mathematical Society for supporting Lenstra's visit to Aberdeen.

\bigskip
\bigskip
\noindent {\bf Competing interests:} The author declares none.

\end{document}